\numberwithin{equation}{section}
\newcommand{\Depth}{2}
\newcommand{\Height}{2}
\newcommand{\Width}{2} 
\newtheorem{theorem}{Theorem}[section]
\theoremstyle{definition}
\theoremstyle{remark}
\numberwithin{equation}{section}
\begin{document}

\title[Homotopical Complexity of a Billiard Flow...]{Homotopical Complexity of a Billiard Flow on the $3D$ Flat Torus with Two Cylindrical Obstacles}

%    Only \author and \address are required; other information is
%    optional.  Remove any unused author tags.

%    author one information
% \author[short version for running head]{name for top of paper}
\author{Caleb C. Moxley}
\address{Birmingham–Southern College \\
900 Arkadelphia Rd \\
Birmingham, AL 35254}
\email{ccmoxley@bsc.edu}

%    author two information
\author{Nandor J. Simanyi}
\address{The University of Alabama at Birmingham\\
Department of Mathematics\\
1300 University Blvd., Suite 490B\\
Birmingham, AL 35294}
\email{simanyi@uab.edu}

\thanks{The second author thankfully acknowledges the support of the National Science Foundation, grant no. 1301537}

\subjclass[2000]{37D50, 37D40}
%    The 2010 edition of the Mathematics Subject Classification is
%    now available.  If you are citing a classification from the
%    new scheme, use the following input coding instead.
%\subjclass[2010]{Primary }

\date{\today}

\begin{abstract}
We study the homotopical rotation vectors and the homotopical rotation
sets for the billiard flow on the unit flat torus with two disjoint
and orthogonal toroidal (cylindric) scatterers removed from it.

The natural habitat for these objects is the infinite cone erected
upon the Cantor set $\text{Ends}(G)$ of all ``ends'' of the hyperbolic
group $G=\pi_1(\mathbf{Q})$. An element of $\text{Ends}(G)$ describes
the direction in (the Cayley graph of) the group $G$ in which the
considered trajectory escapes to infinity, whereas the height function
$s$ ($s\ge 0$) of the cone gives us the average speed at which this
escape takes place.

The main results of this paper claim that the orbits can only escape
to infinity at a speed not exceeding $\sqrt{3}$, and in any direction
$e\in\text{Ends}(\pi_1(\mathcal{Q}))$ the escape is feasible with any
prescribed speed $s$, $0\leq s\leq\dfrac{1}{\sqrt{6}+2\sqrt{3}}$.
This means that the radial upper and lower bounds for the rotation set
$R$ are actually pretty close to each other. Furthermore, we prove the
convexity of the set $AR$ of constructible rotation vectors, and that
the set of rotation vectors of periodic orbits is dense in $AR$. We
also provide effective lower and upper bounds for the topological
entropy of the studied billiard flow.
\end{abstract}

\maketitle

\section{Introduction}
The concept of rotation number finds its origin in the study of the
average rotation around the circle $S^1$ per iteration, as classically
defined by H.  Poincar\'e in the 1880's \cite{P(1952)}, when one
iterates an orientation-preserving circle homeomorphism $f:S^1
\rightarrow S^1$. This is equivalent to studying the average
displacement $(1/n)(F^n(x)-x)$ ($x \in \mathbb{R}$) for
a lifting $F:\mathbb{R} \rightarrow \mathbb{R}$ of $f$ on the
universal covering space $\mathbb{R}$ of $S^1$. The study of fine
homotopical properties of geodesic lines on negatively curved, closed
surfaces goes back at least to Morse \cite{M(1924)}. As far as we
know, the first appearance of the concept of homological rotation
vectors (associated with flows on manifolds) was the paper of
Schwartzman \cite{Sch(1957)}, see also Boyland \cite{B(2000)} for
further references and a good survey of homotopical invariants
associated with geodesic flows.  In the paper \cite{M(1986)}
M. Misiurewicz describes the homotopical rotation intervals of
``almost continuous'' (in an appropriate sense) self maps of the
circle. The high-dimensional generalization of the classic concept of
rotation numbers and sets, from the circle to tori, is accomplished by
Misiurewicz and Ziemian in \cite{MZ(1989)}. The concept of
``persistency'' of the rotation intervals of surjective circle maps is
explored in \cite{M(1989)}. Rotation sets of homeomorphisms of the
2-torus are investigated, and the $1D$ concepts and results are
generalized in \cite{F(1989)}. Further generalization to toral flows
was done by Franks and Misiurewicz in \cite{FM(1990)}.  In
\cite{K(1992)} Kwapisz proves that every convex polygon with rational
vertices can be obtained as the rotation set of a homemomorphism of
the 2-torus. A description of the rotation sets of subshifts of finite
type is given by K. Ziemian in \cite{Z(1995)}. A systematic study of
the relationship between the topological entropy and the rotation set of interval maps
is presented by Blokh and Misiurewicz in \cite{BM(1997)}. This
relationship is furher generalized (to high-dimensional, fairly
general dynamical systems), studied and successfully explored by
W. Geller and M. Misiurewicz in \cite{GM(1999)}. In the series of
papers \cite{J(2001)} and \cite{J(2001)-A} O. Jenkinson discovers
fundamental properties of rotation sets $R\subset\mathbb{R}^d$
associated with a continuous map $f:\, X\to\mathbb{R}^d$ and a
homeomorphism $T:\, X\to X$ of the compact metric space $X$. Here the
rotation vectors are the averages of $f$ with respect to the possible
$T$-invariant probability measures on $X$. In the paper \cite{P(2014)}
Passeggi proves, in some sense, the result reverse to the content of
\cite{K(1992)}: It is shown there that the rotation set of a
topologically generic homeomorphism of $\mathbb{T}^2$ is a rational
polygon.

\medskip

Following an analogous pattern, in \cite{BMS(2006)} we defined the
(still commutative) rotation numbers of a $2D$ billiard flow on the
billiard table $\mathbb{T}^2 = \mathbb{R}^2/\mathbb{Z}^2$ with one
convex obstacle (scatterer) $\mathbf{O}$ removed. Thus, the billiard
table (configuration space) of the model in \cite{BMS(2006)} was
$\mathbf{Q} = \mathbb{T}^2\setminus\mathbf{O}$.  Technically speaking,
we considered trajectory segments $\{x(t) | 0 \le t \le T\} \subset
\mathbf{Q}$ of the billiard flow, lifted them to the universal
covering space $\mathbb{R}^2$ of $ \mathbb{T}^2$ (not of the
configuration space $\mathbf{Q}$), and then systematically studied the
rotation vectors as limiting vectors of the average displacement
$(1/T)(\tilde{x}(T)-\tilde{x}(0)) \in \mathbb{R}^2$ of the lifted
orbit segments $\{\tilde{x}(t)|0 \le t \le T\}$ as $T \rightarrow
\infty$. These rotation vectors are still ``commutative'', for they
belong to the vector space $\mathbb{R}^2$.

In this paper we consider the billiard flow on the unit flat torus
$\mathbb{T}^3=\mathbb{R}^3/\mathbb{Z}^3$ with the tubular
$r_0$-neighborhood of two circles
\[
S_1=\{(x_1,x_2,x_3)\in \mathbb{T}^3: x_2=x_3=0\},
\]
\[
S_2=\{(x_1,x_2,x_3)\in \mathbb{T}^3: x_1=0,\; x_3=1/2\}
\]
serving as scatterers.

Despite all the advantages of the homological
(or ``commutative'') rotation vectors (i. e. that they belong to a
real vector space, and this provides us with useful tools to construct
actual trajectories with prescribed rotational behaviour), in our
current view the ``right'' lifting of the trajectory segments
$\{x(t)|0 \le t \le T\} \subset \mathbf{Q}$ is to lift these segments
to the universal covering space of $\mathbf{Q}$, not of $\mathbb{T}^3$. This, in
turn, causes a profound difference in the nature of the arising
rotation ``numbers'', primarily because the fundamental group
$\pi_1(\mathbf{Q})$ of the configuration space $\mathbf{Q}$ is the
highly complex group
\[
\pi_1(\mathbf{Q})=G=\langle a,b,c,d\, \big|\; ab=ba,\, ac=ca,\, dbd^{-1}=c \rangle,
\]
see \S2 below. After a bounded modification,
trajectory segments $\{x(t)| 0 \le t \le T\} \subset \mathbf{Q}$ give
rise to closed loops $\gamma_T$ in $\mathbf{Q}$, thus defining an
element $g_T = [\gamma_T]$ in the fundamental group
$\pi_1(\mathbf{Q})=G$. The limiting behavior of $g_T$ as
$T \rightarrow \infty$ will be investigated, quite naturally, from two
viewpoints:

\begin{enumerate}
   \item The direction ``$e$'' is to be determined, in which the
     element $g_T$ escapes to infinity in the hyperbolic group
     $G$ or, equivalently, in its Cayley graph
     $\Gamma$, see \S2 below. All possible directions $e$ form the
     horizon or the so called ideal boundary
     $\text{Ends}(G)$ of the group $G$ can be obtained this way, see \cite{CP(1993)}.
   \item The average speed $s = \lim_{T \rightarrow \infty}
   (1/T)\text{dist}(g_T, 1)$ is to be determined, at which the element
   $g_T$ escapes to infinity, as $T \rightarrow \infty$. 
   These limits (or limits $\lim_{T_n \rightarrow \infty} 
   (1/T_n)\text{dist}(g_{T_n}, 1)$ for sequences of positive reals $T_n \nearrow \infty$) are nonnegative real numbers.
\end{enumerate}

The natural habitat for the two limit data $(s,e)$ is the infinite cone
\begin{displaymath}
C=([0, \infty) \times \text{Ends}(G))/(\{0\} \times \text{Ends}(G))
\end{displaymath}
erected upon the set $\text{Ends}(G)$, the latter supplied
with the usual Cantor space topology. Since the homotopical ``rotation
vectors'' $(s,e) \in C$ (and the corresponding homotopical rotation
sets) are defined in terms of the non-commutative fundamental group
$\pi_1(\mathbf{Q})=G$, these notions will be justifiably
called homotopical or noncommutative rotation vectors and sets.

The rotation set arising from trajectories obtained by the arc-length
minimizing variational method will be the so called admissible
homotopical rotation set $AR \subset C$. The homotopical rotation set
$R$ defined without the restriction of admissibility will be denoted
by $R$. Plainly, $AR \subset R$ and these sets are closed subsets of
the cone $C$.

\medskip

The main results of this paper are Theorems 3.1--3.2, and Theorem 4.1.
In theorem 3.1 we find the lower radial estimate
$\dfrac{1}{\sqrt{6}+2\sqrt{3}}=0.169101979\dots$
for the admissible, compact and convex rotation set $AR$, weheras
Theorem 3.2 yields the upper radial bound $\sqrt{3}$ for the bigger full
rotation set $R$.

Utilizing the above results, Theorem 4.1 provides the inequalities
\[
0.185777512\dots=\frac{1}{\sqrt{6}+2\sqrt{3}}\log 3\le h_{top}\le \sqrt{3}\log 7=3.370415245\dots
\]
for the topological entropy of the billiard flow.

\medskip

\section{Prerequisites. Model and Geometry of Orbits}

In this paper we are studying the homotopical properties of the trajectories of the following
billiard flow $(\mathbf{M}, \{S^t\}, \mu)$: From the standard flat $3$-torus
$\mathbb{T}^3=\faktor{\mathbb{R}^3}{\mathbb{Z}^3}$ we cut out the open, tubular $r_0$-neighborhoods
($r_0>0$ is small enough) of the following two disjoint one-dimensional subtori
\[
T_1=\{(x_1,x_2,x_3)\in \mathbb{T}^3:\; x_2=x_3=0 \},
\]
\[
T_2=\{(x_1,x_2,x_3)\in \mathbb{T}^3:\; x_1=0,\, x_3=1/2 \}
\]
serving as scatterers. In the resulting configuration space $\mathbf{Q}=\mathbf{Q}_{r_0}$ a point is moving
uniformly with unit speed, bouncing back at the smooth boundary $\partial\mathbf{Q}$ of
$\mathbf{Q}$ according to the law of specular reflections. The natural invariant measure (Liouville measure)
$\mu$ of the resulting Hamiltonian flow $(\mathbf{M}, \{S^t\}, \mu)$ can be obtained by normalizing the product
of the Lebesgue measure of $\mathbf{Q}$ and the hypersurface measure of the unit sphere $S^2$ of velocities.

A fundamental domain $\Delta_0$ of the configuration space $\mathbf{Q}$ can be obtained by taking
\[
\Delta_0=\left\{x=(x_1,x_2,x_3)\in [0,1]^3 \big\| \text{dist}(x,T_i)\ge r_0,\quad i=1,\, 2 \right\}
\]
by glueing together the opposite faces
\[
F_i^0=\left\{(x_1,x_2,x_3)\in \Delta_0\big| x_i=0 \right\}
\]
and
\[
F_i^1=\left\{(x_1,x_2,x_3)\in \Delta_0\big| x_i=1 \right\}.
\]

We prove later (see Theorem 3.1) that the fundamental group $\pi_1(\mathcal{Q})$ is the hyperbolic group finitely presented as follows: 
\[
\pi_1(\mathcal{Q})\cong \langle a,b,c,d \hspace{0.1cm} |\hspace{0.1cm} ab=ba, \hspace{0.1cm} ac=ca, \hspace{0.1cm} dbd^{-1}=c \rangle.
\]

We are going to study the asymptotic (in the long time run)
homotopical properties of orbit segments $S^{[0,T]}x$ of our billiard
flow, where $T\to \infty$. Given any infinite sequence
$S^{[0,T_n]}x_n$ of orbit segments with $T_n\to\infty$, by adding a
bounded curve to the beginning and ending parts of these orbit
segments, we may assume that $q(S^{T_n}x_n)=q(x_n)=q_0\in \mathbf{Q}$
($n=1,2,\dots$) is a fixed base point $q_0$ for the fundamental group
$\pi_1(\mathbf{Q},q_0)$. Here $q:\; \mathbf{M} \to \mathbf{Q}$ is the natural projection
of the unit tangent bundle onto $\mathbf{Q}$. The loops
\[
\left\{q(S^tx_n)\big| 0\le t\le T_n\right\}
\]
naturally give rise to the curves
\[
\gamma_n=\left\{\gamma_n(t)\big| 0\le t\le T_n\right\}\subset\Gamma
\]
in the Cayley graph $\Gamma$ of $\pi_1(\mathcal{Q})$ 
with $\gamma_n(0)=1$ (the root of $\Gamma$). We are interested in describing all possible pairs $(s,w)$ of limiting speeds
\[
s=\lim_{n\to\infty}T_n^{-1}\cdot\text{dist}(\gamma_n(T_n), e)
\]
and directions $e\in\text{Ends}(\Gamma)$ in which the curves $\gamma_n$ go to infinity in $\Gamma$. Here $\text{dist}$ denotes
the word distance in the group $\pi_1(\mathcal{Q})$ (or, equivalently, the graph distance in $\Gamma$), $0\le s<\infty$, and
$w$ is an element of the Cantor set $\text{Ends}(\Gamma)$ of all ends of the hyperbolic group $\pi_1(\mathcal{Q})$, see \cite{BH(1999)}.
So the natural habitat of the (set of) limiting homotopical ``rotation vectors'' $(s,e)$ is the infinite cone
\[
C=[0, \infty)\times\text{Ends}(\Gamma)/\{0\}\times\text{Ends}(\Gamma)
\]
erected upon the Cantor set $\text{Ends}(\Gamma)$. For convenience, we identify all homotopical rotation vectors $(0,e)$
with zero speed. The arising set of all achievable homotopical rotation vectors $(s,e)\in C$ will be called the (full) rotation
set and denoted by $R$.

\medskip

\subsection{Principles for the design of admissible trajectories}

The trajectories are constructed following the principles enumerated
below, see also Fig. 2.1.1. Throughout this construction of length
minimizing curves (using the variational method) we will be using an
analogy from mechanics as follows: The orbit segments under
construction are being thought of as made by a spanned, perfectly
elastic rubber band that tries to shrink itself as much as possible,
subjected to the side conditions that it needs to touch the boundaries
of some scatterers in an order prescribed by their symbolic collision
itinerary. The somewhat ``misterious'' concepts of ``past force'' and
in the upcoming text refer to the forces arising in this tightened
rubber band that the already constructed piece (the ``past'') exerts
on the current (present) piece under construction.  The ``future
force'' is defined analogously.

\begin{enumerate}
\item In our construction of admissible trajectory segments, using
  the arc length minimizing variational method, we will be assuming
  that the scatterers have radius $r_0=0$. This does not yield an
  actual toroidal (cylindrical) billiard trajectory. However, we can overcome
  this problem by continuously swelling the radii to some small
  positive $r_0$.
\item A trajectory enters a cell through one of eight
  entry faces and exits through a different exit face.
\item Each passage into or out of a cell $C$ occurs via a collision with a scatterer, bounding the
  corresponding entry- or exit face of the cell, meaning that the trajectory does not pass through the tubular
  $r_0$-neighborhood of the crossed face without bouncing back from a scatterer bounding that face. 
\item During its time within $C$, the trajectory visits several  scatterers.
\item For any two consecutively visited scatterers, corresponding
  edges $E_1$ and $E_2$ (which are necessarily different), it should be true that the intersection of
  the convex hull $\text{conv}(E_1,E_2)$ of the edges and the
  interior of the fundamental cell should be nonempty.
\item In the construction of an admissible trajectory segment, at a
  point of collision there is a predetermined force from the past
  which pulls the point of contact between the trajectory and the
  scatterer toward one end of the corresponding edge $E$. The
  construction should be such that the future force pulls the point of
  contact to the opposite end of $E$, thus balancing the point of contact so that it stays
  in the interior of the edge at the equilibrium.
\item If the admissible trajectory under construction is to exit the cell via one of the four
  face crossings $b, b^{-1}, c, c^{-1}$ and the past force is pulling the point of contact $P$ with the
  exit edge upward in the $x_3$ direction (i.e the previous edge of contact is on the top of the cell to exit from),
  then we make sure that the future force also pulls the point $P$ upward. The same is required concerning forces pulling $P$
  downward in the $x_3$ direction. This convention is necessary to obtain proper billiard trajectories and to keep track of face
  crossings.
\end{enumerate}

These principles, together with the arc-length minimizing variational
method, yield a trajectory whose point of contact with a scatterer
occurs away from the ends of the scatterer in the fundamental cell,
i.e. away from the faces perpendicular to the scatterer, thus the
arising shortest curce will indeed be a segment of a billiard
trajectory. Furthermore, any word in $\Gamma$ can serve as the guiding
symbolic itinerary of the trajectory segment to be constructed this
way.  This guarantees that a trajectory of a prescribed homotopy type
can be constructed via the variational method for arc length
minimizing curves.

\begin{figure} 
\begin{center}
\begin{tikzpicture}
\coordinate (O) at (0,0,0);
\coordinate (A) at (0,\Width,0);
\coordinate (B) at (0,\Width,\Height);
\coordinate (C) at (0,0,\Height);
\coordinate (D) at (\Depth,0,0);
\coordinate (E) at (\Depth,\Width,0);
\coordinate (F) at (\Depth,\Width,\Height);
\coordinate (G) at (\Depth,0,\Height);
\coordinate (H) at (2,0,4);
\coordinate (I) at (0,0,4);
\coordinate (K) at (-2,0,2);
\coordinate (L) at (-2,0,0);
\coordinate (first) at (0,1,0.3);

\draw[black,ultra thin] (O) -- (C) -- (G) -- (D) -- cycle;% Bottom Face
\draw[black,ultra thin] (O) -- (A) -- (E) -- (D) -- cycle;% Back Face
\draw[black,ultra thin] (O) -- (A) -- (B) -- (C) -- cycle;% Left Face
\draw[black,ultra thin] (D) -- (E) -- (F) -- (G) -- cycle;% Right Face
\draw[black,ultra thin] (C) -- (B) -- (F) -- (G) -- cycle;% Front Face
\draw[black,ultra thin] (A) -- (B) -- (F) -- (E) -- cycle;% Top Face

\draw[black, ultra thick] (0,1,2)-- (2,1,2);
\draw[black, ultra thick] (0,1,0)-- (1.2,1,0);
\draw[black, ultra thick] (2,1,0)-- (1.275,1,0);
\draw[black, ultra thick] (0,2,0)-- (0,2,2);
\draw[black, ultra thick] (2,2,0)-- (2,2,2);
\draw[black, ultra thick] (2,0,0)-- (2,0,2);
\draw[black, ultra thick] (0,0,0)-- (0,0,2);
\draw[black,ultra thin, ->] (2,1,1) -> (2.6,1,1);
\node[right] at (2.6,1,1) {$a$};
\draw[black,ultra thin, ->] (1,2,1) -> (1,2.6,1);
\node[above] at (1,2.6,1) {$d$};
\draw[black,ultra thin, ->] (1,0.5,2) -> (1,0.5,2.8);
\node[left] at (1,0.5,2.8) {$b$};
\draw[black,ultra thin, ->] (1,1.5,2) -> (1,1.5,2.8);
\node[left] at (1,1.5,2.8) {$c$};

\draw[black,ultra thin, ->] (-3,0,0) -> (-3,1,0);
\draw[black,ultra thin,->] (-3,0,0) -> (-3,0,1);
\draw[black,ultra thin, ->] (-3,0,0) -> (-2,0,0);
\node[right] at (-2,0,0) {$y$};
\node[above] at (-3,1,0) {$z$};
\node[below left] at (-3,0,1) {$x$};
\end{tikzpicture}

\caption*{Figure 2.1.1: The fundamental domain, two perpendicular and disjoint toroidal scatterers removed}
\end{center}
\end{figure}
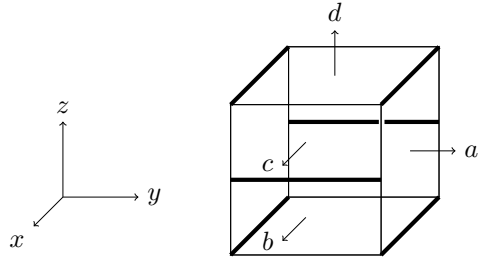

\medskip

\section{The admissible rotation set}

First we describe the fundamental group of the configuration space.

\begin{theorem}
The fundamental group $\pi_1(\mathcal{Q})$ is the group finitely presented as follows: 
\[
\pi_1(\mathcal{Q})\cong \langle a,b,c,d \hspace{0.1cm} |\hspace{0.1cm} ab=ba, \hspace{0.1cm} ac=ca, \hspace{0.1cm} dbd^{-1}=c \rangle.
\]
\end{theorem}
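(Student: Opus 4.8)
The plan is to realize $\mathbf{Q}$ through its fundamental domain with the prescribed face identifications and apply the Seifert--van Kampen theorem, reading the generators off Figure~2.1.1 as oriented face-crossings. Three of the four generators are the usual coordinate crossings: $a$ crosses the $x_2$-faces, $d$ crosses the $x_3$-faces. The key observation is that the scatterer $T_2=\{x_1=0,\,x_3=1/2\}$ lies inside the identified $x_1$-face and cuts it (along its trace $x_3=1/2$) into a lower and an upper half; the deleted tube therefore forces the crossing of this face to be recorded as two distinct generators, $b$ (crossing below $T_2$) and $c$ (crossing above $T_2$). This splitting of one face-pair into two is exactly why one obtains four generators rather than the three of $\pi_1(\mathbb{T}^3)=\mathbb{Z}^3$.

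Next I would extract the relators from the $2$-cells and explain each geometrically. Since $a$ runs parallel to $T_2$, sliding an $x_1$-crossing along the $x_2$-direction meets no obstruction, giving $ab=ba$ and $ac=ca$. The $x_3$-loop $d$, on the other hand, sweeps across the level $x_3=1/2$ occupied by $T_2$, and in doing so transports a below-crossing to an above-crossing; this yields the conjugation relation $dbd^{-1}=c$. Equally important is the relation that is \emph{absent}: $a$ and $d$ do not commute, because the commutator loop $[a,d]$ bounds a piece of the $x_2x_3$-coordinate $2$-torus that is pierced by $T_1=\{x_2=x_3=0\}$, so $T_1$ is precisely what breaks the commutativity of the $x_2$- and $x_3$-loops. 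Together these account for the presentation $\langle a,b,c,d\mid ab=ba,\ ac=ca,\ dbd^{-1}=c\rangle$.

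As a structural cross-check I would note the product description obtained by deleting a single scatterer: writing $N(T_1)=S^1_{x_1}\times D^2_{(x_2,x_3)}$ gives $\mathbb{T}^3\setminus N(T_1)\cong S^1_{x_1}\times(\mathbb{T}^2_{x_2,x_3}\setminus D^2)$, and since the once-punctured torus is a wedge of two circles this is $\mathbb{Z}\times F_2$. Restoring the deletion of $T_2$ then exhibits $G$ as the HNN extension $(\mathbb{Z}\times F_2)\,*_{\langle b\rangle\cong\langle c\rangle}$ with stable letter $d$, equivalently a graph-of-groups decomposition obtained by cutting $\mathbf{Q}$ transverse to the $x_3$-direction, whose monodromy identifies $b$ with $c$. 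A convenient sanity check is that the abelianization collapses to $\mathbb{Z}^3$ (the relation $dbd^{-1}=c$ becomes $b=c$), in agreement with a direct Mayer--Vietoris computation of $H_1(\mathbf{Q})$.

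The hard part is the \emph{completeness} of the relation list rather than its plausibility. One must verify that the conjugation $dbd^{-1}=c$ is extracted with the correct base-change of the two $x_1$-crossings across the $T_2$-level, and, more delicately, that no further relators survive: equivalently, that the associated edge subgroups of the HNN decomposition are exactly $\langle b\rangle$ and $\langle c\rangle$, and that the $2$-cells of the complex contribute only the three listed relators. Confirming that van Kampen yields precisely these relations, with the scatterers removing just enough to free $a$ from $d$ while leaving $a$ central over $\langle b,c\rangle$, is the step I expect to require the most care.
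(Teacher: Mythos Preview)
Your approach is sound and arrives at the same presentation, but the paper's route is different and more economical on exactly the point you flag as hard, namely completeness of the relator list. Rather than running Seifert--van Kampen on the identified cube or invoking an HNN decomposition, the paper adjoins to the scatterers three auxiliary arcs $E_1=\{x_1=x_3=0\}$, $E_2=\{x_1=x_2=0,\,0\le x_3\le\tfrac12\}$, $E_3=\{x_1=x_2=0,\,\tfrac12\le x_3\le 1\}$, so that $T_1\cup E_1\cup E_2\cup E_3$ is the entire $1$-skeleton of the standard cube CW structure on $\mathbb{T}^3$ (with $T_2$ attached at the extra vertex $(0,0,\tfrac12)$). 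The complement $\mathcal{Q}'$ of this enlarged graph is homotopy equivalent to a bouquet of four loops, so $\pi_1(\mathcal{Q}')$ is \emph{free} on the face-crossings $a,b,c,d$; recovering $\mathcal{Q}$ by filling back the three arcs $E_i$ then kills exactly their three meridians $aba^{-1}b^{-1}$, $aca^{-1}c^{-1}$, $dbd^{-1}c^{-1}$, and nothing else. Your version buys a richer structural picture (the HNN description and the abelianization check are genuinely informative and absent from the paper); the paper's version buys automatic completeness. One small caveat on your cross-check: the HNN base $\mathbb{Z}\times F_2=\langle a\rangle\times\langle b,c\rangle$ arises from cutting $\mathcal{Q}$ along a torus $x_3=\text{const}$, and is not the same copy of $\mathbb{Z}\times F_2$ as $\pi_1\big(\mathbb{T}^3\setminus N(T_1)\big)=\langle x_1\text{-loop}\rangle\times\langle a,d\rangle$ that you produce first, so the passage between those two sentences should be tightened.
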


\begin{proof}
First, we add back the three extra edges
\[
E_1=\left\{(x_1,x_2,x_3) \big|\; x_1=x_3=0 \right\},
\]
\[
E_2=\left\{(x_1,x_2,x_3) \big|\; x_1=x_2=0,\, 0\le x_3\le\frac{1}{2} \right\},
\]
\[
E_3=\left\{(x_1,x_2,x_3) \big|\; x_1=x_2=0,\, \frac{1}{2}\le x_3\le 1 \right\}
\]
modulo $\mathbb{Z}^3$. The arising modified configuration space $\mathcal{Q}'$ is homotopically equivalent to the bouquet of $4$ loops,
corresponding to the face crossings $a,\, b,\, c,\, d$, so the fundamental group of $\mathcal{Q}'$ is freely generated by $a,\, b,\, c,\, d$.
Removing the added three edges $E_1,\, E_2,\, E_3$ means that a small loop around them is collapsed to the identity $1$ of the group, i.e.
$aba^{-1}b^{-1}=1$ (corresponding to the removal of $E_2$), $aca^{-1}c^{-1}=1$ (corresponding to the removal of $E_3$), and
$dbd^{-1}c^{-1}=1$ (corresponding to the removal of $E_1$).
\end{proof}

As discussed in \S2.1, the admissible trajectories are constructed in
such a way that each point of contact $p_n$ with a scatterer has a
force coming from the preceding point of contact $p_{n-1}$ and the
succeeding point of contact $p_{n+1}$ which pulls $p_n$ toward
opposite ends of the scatterer. Recall that the two scatterers $S_1$
and $S_2$ are perpendicular and disjoint.
The trajectories are immediately lifted to the covering space $\mathbb{R}^3$ of $\mathbb{T}^3$. 

\subsection{Admissible turns}
The requirements from \S 2.1 give rise to 17 admissible turns which
are unique up to time-reversal and geometric symmetry. These
admissible turns, occuring within a fundamental cell, are given in the
table below along with a corresponding upper bound for the maximum
amount of time each trajectory piece remains within the fundamental
cell.

\begin{center}
\begin{tabular}{l|l|l|l|l|l}
turn & upper bound & turn & upper bound & turn & upper bound  \\\hline
$aa$ & $\sqrt{6}$ & $ad^{-1}$ & $\sqrt{6}$ & $bd^{-1}$ & $\frac{3}{2}$ \\ 
$ab$ & $\sqrt{6}+\frac{3\sqrt{3}}{2}$ & $ba$ & $\frac{3}{2}$ &  $da$ & $\sqrt{6}$ \\ 
$ac$ & $\frac{3}{2}$ & $bb$ & $\sqrt{6}$ &  $db$ & $\frac{3}{2}$ \\
$ad$ & $\sqrt{6}$ & $bc$ & $\sqrt{6}+2\sqrt{3}$ & $dc$ & $\sqrt{6}+\frac{3\sqrt{3}}{2}$ \\ 
$ab^{-1}$ & $\sqrt{6}+\frac{\sqrt{3}}{2}$ & $bd$ & $\sqrt{6}+\frac{\sqrt{3}}{2}$ & $dd$ & $\sqrt{6}$ \\
$ac^{-1}$ & $\sqrt{6}+\frac{\sqrt{3}}{2}$ & $bc^{-1}$ & $\sqrt{6}+\sqrt{3}$ & &
\end{tabular}
\end{center}

These upper bounds are found using the extreme points of scatterers on
the entry and exit faces (most adverse situation concerning the time spent
in the investigated cell), and the midpoints of scatterers visited
between entry and exit scatterers. It is clear that of these
admissible turns, the turn $bc$ requires the longest amount of time
within a single fundamental cell: The turn $bc$ enters the fundamental
cell via the lower half of the scatterer located at
$\{0\}\times[0,1]\times\{\frac{1}{2}\}$. By symmetry, we may assume
that the force predetermined by the past pulls the point of contact
toward the point $(0,0,\frac{1}{2})$. Admissibility and length
minimization require the trajectory to visit these scatterers in the
order listed:
$\{0\}\times[0,1]\times\{\frac{1}{2}\}$,
$[0,1]\times\{1\}\times\{0\}$,
$\{1\}\times[0,1]\times\{\frac{1}{2}\}$,
$[0,1]\times\{0\}\times\{1\}$,
$\{0\}\times[0,1]\times\{\frac{1}{2}\}$,
$[0,1]\times\{1\}\times\{1\}$, and finally
$\{1\}\times[0,1]\times\{\frac{1}{2}\}$. Observe that the time spent within the fundamental cell is no more than
\[
\sqrt{\frac{3}{2}}+\frac{\sqrt{3}}{2}+\frac{\sqrt{3}}{2}+\frac{\sqrt{3}}{2}+\frac{\sqrt{3}}{2}+\sqrt{\frac{3}{2}}
=\sqrt{6}+2\sqrt{3},
\]
and the final point of contact with the last scatterer is pulled toward
the endpoint $(1,0,\frac{1}{2})$. A similar elementary inspection of the
remaining turns admitted by the principles in \S 2.1 shows that none
spend more time in the fundamental cell than does a $bc$ turn.

Note that, in the construction of admissible billiard orbits using the
arc-length minimizing variational method, these upper bounds are used to
show the existence of billiard orbits whose liftings to the fundamental group
$\pi_1(\mathcal{Q})$ escape to infinity as fast as possible. While proving the lower
radial bound for $AR$ in Theorem 3.2, we need to slow down the speed of escape by injecting
idle runs, i. e. pieces of the trajectory spending a long time in a single elementary cell.

\subsection{Anchoring}

In constructing an admissible orbit segment corresponding to an
irreducible finite word $\textbf{w}=w_1w_2\cdots w_k$, the above
construction works only if two additional toroidal scatterers are added
to the trajectory segment. The first edge to be added is to the
initial piece of the trajectory segment whereas the second is to the
final piece of the trajectory segment constructed using the principles
enumerated above. These two edges are added in such a way that neither
lengthens the irreducible finite word $\textbf{w}$ and that each
balances the force acting on the initial or terminal point of contact
coming from the already constructed trajectory segment. Finally, we
call the midpoints of the the newly-constructed initial and final
edges of the trajectory segment the anchor points, and we call these
newly-added initial and final edges anchor edges.\\

We construct here the initial anchor point using the method just
described. The construction of the final (terminal) anchor point is
analogous. We construct the initial anchor point for only one possible
situation --- all other situations can be constructed using the same
method. Assume that the constructed trajectory segment exits the
fundamental cell through the upper face via the edge $\{1\}\times
[0,1]\times \{1\}$ and that the point coming from the
already-constructed forward segment pulls the point of contact
$\left(1,x_2,1\right)$ towards the point $\left( 1,0,1\right)$. Then
the initial anchor point would be the point
$\left(\frac{1}{2},1,\frac{1}{2}\right)$.

\subsection{Radial estimates of the rotation set}
\begin{theorem}
The admissible rotation set, and thus the full rotations set, contains
the ball centered at 0 with radius $1/(\sqrt{6}+2\sqrt{3})$. That is, we have
\[
B\left(0,\frac{1}{\sqrt{6}+2\sqrt{3}}\right)\subset AR \subset R.
\]

We note that the number $\sqrt{6}+2\sqrt{3}$ in the denominator of the radius is the weakest upper
bound that we obtained in the table at the beginning of Subsection 3.1 for the maximum amount of time
an admissible orbit segment can spend in each visited fundamental cell.

\begin{proof}
Let $\textbf{w}=w_1w_2w_3\dots$ be an (irreducible) infinite word
corresponding to an end of the hyperbolic fundamental group of the
configuration space $\pi_1(\mathcal{Q})$. The results of this section
enable the construction of an infinitely long admissible trajectory
$S^{[0,\infty)}x_0$ which has the itinerary prescribed by $\textbf{w}$, yielding
\[
\underset{n\to\infty}\liminf \frac{n}{T_n}\geq \frac{1}{\sqrt{6}+2\sqrt{3}}=0.169101979\dots
\] 
Here $T_n$ is the time spent by the trajectory $S^{[0,\infty)}x_0$ in
  the first $n$ fundamental cells. Now, since any trajectory can be
  slowed down by injecting an appropriate amount of idle collisions,
  i.e. a lot of consecutive collisions in the same fundamental cell,
  we see that every homotopical rotation number $(s,e)\in
  B\left(0,\frac{1}{\sqrt{6}+2\sqrt{3}}\right)$ can be obtained by an
  admissible trajectory.

\end{proof}

\end{theorem}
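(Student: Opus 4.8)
The plan is to prove the nontrivial inclusion $B\bigl(0,\tfrac{1}{\sqrt{6}+2\sqrt{3}}\bigr)\subset AR$; the inclusion $AR\subset R$ is immediate from the definitions, since every admissible rotation vector is in particular a rotation vector. Write $\rho=\tfrac{1}{\sqrt{6}+2\sqrt{3}}$. A point of the ball is a pair $(s,e)$ consisting of a direction $e\in\text{Ends}(\pi_1(\mathcal{Q}))$ and a speed $s$ with $0\le s\le\rho$. It therefore suffices to produce, for each such $(s,e)$, an admissible billiard trajectory whose homotopical rotation vector is exactly $(s,e)$.

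First I would fix the direction. An end $e$ is represented by an infinite geodesic word $\mathbf{w}=w_1w_2w_3\cdots$ in the generators $a,b,c,d$ and their inverses, geodesic meaning $\text{dist}(w_1\cdots w_n,1)=n$ for all $n$. Using the arc-length minimizing rubber-band construction of \S2.1 together with the anchoring procedure of \S3.2 to fix the initial piece, I would build an admissible one-sided infinite orbit $S^{[0,\infty)}x_0$ whose symbolic collision itinerary is precisely $\mathbf{w}$; the discussion of \S2.1 guarantees that any word in $\Gamma$ can be realized this way. The speed estimate then reads off the table of \S3.1: if $\tau_i$ is the time spent in the $i$-th traversed cell, each $\tau_i$ is bounded by the largest table entry, $\tau_i\le\sqrt{6}+2\sqrt{3}$ (attained by the $bc$ turn). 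Hence $T_n=\sum_{i=1}^n\tau_i\le n(\sqrt{6}+2\sqrt{3})$, and since $\mathbf{w}$ is geodesic the group element after $n$ cells is $w_1\cdots w_n$, of word length $n$, so that
\[
\liminf_{n\to\infty}\frac{\text{dist}(g_{T_n},1)}{T_n}=\liminf_{n\to\infty}\frac{n}{T_n}\ge\frac{1}{\sqrt{6}+2\sqrt{3}}=\rho .
\]
This yields an admissible trajectory in the direction $e$ with escape speed at least $\rho\ge s$.

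To hit the prescribed speed $s$ exactly, I would slow the orbit down by injecting idle runs: long blocks of consecutive collisions inside a single fundamental cell that cost an arbitrary amount of time but cross no face, hence leave the group element $g_T$ and its word length unchanged. Inserting idle runs between the productive cell crossings of $\mathbf{w}$ increases the times $T_n$ while keeping $\text{dist}(g_{T_n},1)=n$, so the ratio $n/T_n$ can be driven continuously down from its idle-free value $(\ge\rho)$ to any target in $(0,\rho]$; distributing the idle time at the appropriate asymptotic density makes $\lim_n n/T_n=s$. Since idle runs do not change which end $\mathbf{w}$ converges to, the direction remains $e$, and the value $s=0$ is the apex of the cone, which lies in $AR$ trivially. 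This realizes $(s,e)\in AR$ for every $(s,e)\in B(0,\rho)$.

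The main obstacle I expect is not the arithmetic---the table of \S3.1 already carries the quantitative weight---but verifying that admissibility is preserved throughout the construction. Concretely, one must check that the force-balance conventions of \S2.1 hold at every collision of the orbit dictated by $\mathbf{w}$, so that the variational minimizer is a genuine billiard trajectory rather than merely a broken geodesic; that the anchoring of \S3.2 closes off the initial piece without lengthening or altering the reduced word; and, most delicately, that idle runs can be inserted admissibly, with arbitrarily large yet finely tunable time cost, while respecting the force-balance conditions at the junctions where an idle block meets the productive segments on either side. Once these admissibility checks are secured, the two displayed estimates combine to give the full closed ball.
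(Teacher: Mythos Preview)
Your proposal is correct and follows essentially the same approach as the paper: represent an end by an infinite (irreducible) word, invoke the admissible-turn construction of \S2.1 and the anchoring of \S3.2 to realize that word as an admissible orbit, read the per-cell time bound $\sqrt{6}+2\sqrt{3}$ off the table to get $\liminf n/T_n\ge\rho$, and then inject idle collisions to slow the speed to any prescribed $s\le\rho$. Your write-up is in fact more explicit than the paper's on two points---requiring the word to be geodesic (not merely irreducible) so that the word length after $n$ cells is genuinely $n$, and flagging the admissibility checks at idle-run junctions---both of which the paper leaves implicit.
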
 

\begin{theorem} 
The full rotation set, and thus the admissible rotation set, is contained within a ball centered at 0 with radius
$\sqrt{3}=1.732050808\dots$. 
\end{theorem}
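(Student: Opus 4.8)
The plan is to convert the abstract word-metric growth of $g_T$ into a count of integer-grid-plane crossings of the lifted orbit, and then to bound that count using the unit-speed constraint via a Cauchy--Schwarz estimate that produces exactly the constant $\sqrt3$. The appearance of $\sqrt3$ is no accident: it is the largest $\ell^1$-norm of a unit vector in $\mathbb{R}^3$, i.e. the maximal number of coordinate hyperplanes a straight unit-speed motion can meet per unit length.

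First I would fix an orbit segment $\{x(t):0\le t\le T\}$ and lift it to a path $\tilde x=(\tilde x_1,\tilde x_2,\tilde x_3)$ in the universal cover $\mathbb{R}^3$ of $\mathbb{T}^3$. Each face of the fundamental cube lies in one of the integer coordinate planes $\{\tilde x_i=k\}$, and every letter of the word naturally representing $g_T$ is produced exactly when $\tilde x$ transversally crosses such a plane: one crossing contributes one generator among $a^{\pm1},b^{\pm1},c^{\pm1},d^{\pm1}$ (the two generators attached to the $\tilde x_3$-direction being distinguished by the side of the scatterer $S_2$ on which the crossing occurs). Since the word distance is at most the length of any representing word, this gives the crude but decisive bound
\[
\text{dist}(g_T,1)\le N(T),
\]
where $N(T)$ denotes the total number of grid-plane crossings during $[0,T]$.

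Next I would estimate $N(T)$ segment by segment. Between consecutive collisions the lifted orbit is a straight segment, say of length $\ell$ and unit direction $v=(v_1,v_2,v_3)$; the number of integer planes it meets is at most $\|v\|_1\,\ell$ plus a bounded endpoint correction, and the elementary inequality $\|v\|_1\le\sqrt3\,\|v\|_2=\sqrt3$ (Cauchy--Schwarz, with equality exactly when $|v_1|=|v_2|=|v_3|$) bounds this by $\sqrt3\,\ell$ up to the correction. Summing over all segments, whose lengths add up to $T$ because the speed is one, yields $N(T)\le\sqrt3\,T+E(T)$, where $E(T)$ collects the endpoint corrections together with any crossings that are immediately reversed at a nearby scatterer. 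Combining with the previous display and dividing by $T$ gives
\[
\frac{\text{dist}(g_T,1)}{T}\le\sqrt3+\frac{E(T)}{T},
\]
so that $s=\lim_{n\to\infty}T_n^{-1}\,\text{dist}(g_{T_n},1)\le\sqrt3$ along any sequence with $E(T_n)=o(T_n)$, whence $R\subset B(0,\sqrt3)$.

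The main obstacle is controlling the error $E(T)$, that is, ensuring the ``wasteful'' crossings do not accumulate at leading order. Two kinds of waste must be addressed: the purely combinatorial overcount of at most one partial plane per coordinate at each segment endpoint, and the genuinely dynamical phenomenon of the orbit poking across a plane and being turned back by a scatterer sitting on the far side, which produces a canceling pair such as $a\,a^{-1}$. The first is harmless because a crossing charged to the end of one segment is the very crossing charged to the start of the next, so the global count is not the sum of the per-segment ``$+1$'' terms. The second is handled by discarding such reversed pairs before applying $\text{dist}(g_T,1)\le N(T)$, since they cancel in $G$ and do not lengthen the reduced element. Making this cancellation rigorous in the non-abelian group $G$ --- in particular verifying that the geometry of the two perpendicular scatterers from Theorem 3.1 forces every reversal to be genuinely contractible rather than to wind nontrivially around a scatterer --- is the delicate point of the argument and is where the specific configuration of the model enters.
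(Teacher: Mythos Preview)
Your approach is essentially the same as the paper's: bound the word length of $g_T$ by the number of coordinate-hyperplane crossings, then control that count via $\|v\|_1\le\sqrt3\,\|v\|_2$. Where you diverge is in the bookkeeping, and you have made it harder than it needs to be.

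The paper does not split the orbit at collision points. It simply observes that between any two consecutive crossings of planes $\{\tilde x_i\in\mathbb Z\}$ one has $\int|v_i|\,dt\ge 1$, hence $\int_0^T|v_i(t)|\,dt\ge n_i-1$ for the total number $n_i$ of $i$-direction crossings over all of $[0,T]$. Summing over $i$ and using $|v_1|+|v_2|+|v_3|\le\sqrt3\,|v|=\sqrt3$ gives
\[
T\ge\frac{1}{\sqrt3}\bigl(n_a+n_b+n_c+n_d-3\bigr),
\]
with only the constant $-3$ as error. Your per-segment decomposition generates a ``$+1$'' at every collision and then forces you to argue a telescoping, which is correct but unnecessary; the global integral already absorbs all bending of the path.

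Your second ``obstacle''---reversed crossings like $aa^{-1}$ and whether they are genuinely contractible---is a red herring. The inequality $\mathrm{dist}(g_T,1)\le N(T)$ holds because $N(T)$ is the length of \emph{some} word representing $g_T$; whether that word is reduced, or whether cancelling subwords wind around a scatterer, is irrelevant to an upper bound on word distance. There is nothing delicate here and no model-specific geometry is needed for this direction of the theorem.
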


\begin{proof}
The proof is similar to the upper radial estimate for the full
rotation set for the billiard model discussed in [12]. However, we
need to consider more face crossings in our current model. \\ \\
Let $S^{[0,T]}x_0$ be a trajectory segment with foot point $x_0=x(0)$ and $T$
large, $S^t(x_0)=x(t)=(q(t), v(t))$, $v(t)=(v_1(t),v_2(t),v_3(t))$.
Eventually, we'll take $T\to \infty$ and get asymptotic
estimates. Now, denote by $n_a$, $n_b$, $n_c$, $n_d$ the number of
face crossings the trajectory segment $S^{[0,T]}x_0$ makes within the
fundamental cell, where a face crossing means a crossing of the face
of the fundamental cell associated with each generator $a$, $b$, $c$,
or $d$ of the fundamental group. Hence, $n_a$ would be the number
crossings of the $y-z$ face (irrespective of direction), $n_b$ the
number of crossings of the half of the $x-z$ face below the scatterer
on that face, $n_c$ the number of crossing of the half of the $x-z$
face above the scatterer on that face, and $n_d$ the number of
crossings of the $x-y$ face. Now, because the integral of $|v_i|$
between any two face crossings in the $i^\text{th}$ direction is at
least one, we have the following inequalities:
\[\overset{T}{\underset{0}{\int}} \lvert v_1(t) \rvert dt \geq n_b+n_c-1, \]
\[\overset{T}{\underset{0}{\int}} \lvert v_2(t) \rvert dt \geq n_a-1, \] 
and
\[\overset{T}{\underset{0}{\int}} \lvert v_3(t) \rvert dt \geq n_d-1. \]
Adding the inequalities, we have
\[
T=\overset{T}{\underset{0}{\int}} \lvert v(t) \rvert dt \geq \frac{1}{\sqrt{3}}\overset{T}{\underset{0}{\int}} (\lvert v_1(t) \rvert
+\lvert v_2(t) \rvert+\lvert v_3(t) \rvert) dt\geq \frac{n_a+n_b+n_c+n_d-3}{\sqrt{3}}.
\]
And thus we have
\[
\underset{T\to\infty}\limsup \frac{n_a+n_b+n_c+n_d}{T}\leq \sqrt{3}.
\]
\end{proof}

\begin{theorem}[Convexity of the Admissible Rotation Set]
The admissible rotation set $AR$ is a convex subset of the cone $C$.
\end{theorem}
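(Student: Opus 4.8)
The plan is to reduce the convexity of $AR$ to a single, geometrically transparent property — star-shapedness with respect to the apex — by first understanding the geodesic structure of the cone $C$. The key topological observation is that $\text{Ends}(G)$ is a totally disconnected Cantor set: therefore the ``direction'' coordinate, viewed as a map $C\setminus\{0\}\to\text{Ends}(G)$, is continuous into a totally disconnected space and hence locally constant, so it is constant on every connected subset of $C\setminus\{0\}$. Consequently, any continuous path in $C$ joining two rotation vectors $(s_1,e_1)$ and $(s_2,e_2)$ with $e_1\neq e_2$ must pass through the apex $0$. In particular, geodesics in $C$ are of exactly two kinds: a geodesic between two vectors $(s_1,e)$ and $(s_2,e)$ sharing the same end $e$ is the radial segment $\{(s,e)\,:\,\min(s_1,s_2)\le s\le\max(s_1,s_2)\}$, while a geodesic between vectors $(s_1,e_1)$ and $(s_2,e_2)$ with $e_1\neq e_2$ is the concatenation through the apex of the two radial segments $[0,s_1]\times\{e_1\}$ and $[0,s_2]\times\{e_2\}$. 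Thus, to prove that $AR$ is convex it suffices to prove that $AR$ is star-shaped with respect to $0$, i.e.\ that for every $(s,e)\in AR$ the whole radial segment $\{(s',e)\,:\,0\le s'\le s\}$ lies in $AR$.

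Next I would establish this star-shapedness using the slowing-down mechanism already introduced in the proof of Theorem 3.1. Fix $(s,e)\in AR$; by the construction of admissible orbits there is an admissible trajectory with infinite symbolic itinerary $\mathbf{w}=w_1w_2w_3\cdots$ escaping to the end $e$ at speed $s$. Given a target speed $s'$ with $0\le s'\le s$, I would build an admissible trajectory realizing $(s',e)$ by injecting \emph{idle runs} — long blocks of consecutive collisions confined to a single fundamental cell — with a prescribed asymptotic density. Because each idle run is a genuine admissible billiard sub-orbit that involves no face crossing, it contributes no letters to the word and leaves the productive group elements $g_T$ unchanged while lengthening the elapsed time $T$; tuning the idle fraction to the value $1-s'/s$ therefore drives the escape speed from $s$ down to exactly $s'$. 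Crucially, the injected runs do not alter the tail of $\mathbf{w}$, so the escaping end $e$ is preserved. Hence $(s',e)\in AR$ for every $s'\in[0,s]$, and since $AR$ is closed the apex and the endpoint $(s,e)$ are included as well.

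Finally I would assemble the two ingredients. Given any two points $p,q\in AR$: if they share an end, the geodesic joining them is a radial sub-segment and lies in $AR$ by star-shapedness; if their ends differ, the geodesic passes through the apex and is the union of two radial segments, each of which lies in $AR$ again by star-shapedness. In either case the geodesic is contained in $AR$, so $AR$ is convex. I expect the main obstacle to be the quantitative control in the middle step: one must verify that the density of idle runs can be tuned so as to realize \emph{every} intermediate speed $s'\in[0,s]$ (not merely a discrete set), that this is compatible with admissibility and with exactly preserving the escaping end $e$, and that closedness of $AR$ then captures the boundary speeds $s'=0$ and $s'=s$. By contrast, the ``paths must pass through the apex'' fact, which does all the work of handling distinct directions, is a soft consequence of the total disconnectedness of $\text{Ends}(G)$ and requires no estimates.
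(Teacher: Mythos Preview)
Your proposal is correct and follows essentially the same approach as the paper: reduce convexity in the cone $C$ to star-shapedness with respect to the apex (since $C$ is a family of rays glued at a single point), and then establish star-shapedness by injecting idle runs to slow the escape speed from $s$ down to any $s'\in[0,s]$ while preserving the end $e$. Your write-up is more explicit than the paper's about why the reduction to star-shapedness is valid (via the total disconnectedness of $\text{Ends}(G)$) and about the quantitative tuning of idle density, but the underlying argument is the same.
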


\begin{proof}
The cone $C$ is a totally disconnected, Cantor set-type family of
infinite rays that are glued together at their common endpoint, the
vertex of the cone. Therefore, the convexity of $AR$ means that for
any $(s,e)\in AR$ and for any $t$ with $0\le t\le s$ we have $(t,e)\in AR$.
However, this immediately follows from our construction, since we
can always insert a suitable amount of idle runs into an admissible
orbit segment to be constructed, hence slowing it down to the
asymptotic speed $t$, as required.
\end{proof}

\begin{theorem}[Periodic Rotation Vectors are Dense in $AR$]
All the rotation vectors $(s,e)\in AR$ that correspond to periodic
admissible trajectories form a dense subset of $AR$.
\end{theorem}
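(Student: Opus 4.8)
\section*{Proof proposal for Theorem 3.5 (Periodic rotation vectors are dense in $AR$)}

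The plan is to show that for every $(s,e)\in AR$ and every $\varepsilon>0$ there is a periodic admissible trajectory whose rotation vector $(s',e')$ satisfies $\text{dist}_C\bigl((s,e),(s',e')\bigr)<\varepsilon$. Recall that in the cone $C$ two vectors $(s,e)$ and $(s',e')$ with $s,s'>0$ are close precisely when $|s-s'|$ is small and the geodesic rays in $\Gamma$ representing the ends $e,e'$ agree on a long initial segment. I would first reduce the speed coordinate to an inequality: by Theorem 3.3 (convexity) together with the idle-run slowdown used there, once a periodic orbit in a direction $e'$ is available with (maximal) speed $s'_{\max}\ge s-\varepsilon$, one inserts into each period the same idle run---a block of collisions in a single cell that crosses no face and hence leaves the homotopy class unchanged---to lower the asymptotic speed, while keeping the orbit periodic and the end $e'$ fixed. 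Since many small idle excursions can be added, the achievable speeds are dense in $[0,s'_{\max}]$, which suffices. Thus it is enough to produce, for each large $N$, a periodic admissible orbit whose end $e_N$ shares a long prefix with $e$ and whose speed is at least $s-\varepsilon$.

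For the construction, let $\mathbf{w}=w_1w_2w_3\cdots$ be the infinite geodesic word in $\Gamma$ representing $e$, realized by the admissible trajectory that gives $(s,e)$, and set $u=u_N=w_1w_2\cdots w_N$. I would construct a periodic admissible orbit whose cyclic symbolic itinerary is $u$ by the arc-length minimizing variational method of \S2.1--\S3.2: minimizing length in the free-homotopy class of the conjugacy class of $g:=[u]\in G$ yields a closed billiard trajectory, and the force-balancing and anchoring conventions guarantee that all contacts occur in the interiors of the scatterer edges, so the minimizer is a genuine admissible periodic billiard orbit. When it is lifted to the universal cover its homotopy element after $k$ periods is $g^{k}$, so the end it determines is $e_N=g^{+\infty}=\lim_k g^{k}$ and its asymptotic speed is $s_N=\ell(g)/T_0(N)$, where $\ell(g)=\lim_k |g^{k}|_G/k$ is the stable word length and $T_0(N)$ the time-period.

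The heart of the argument, and the main obstacle, is the control of $e_N$ and $s_N$ inside the hyperbolic group $G$. For the end, I would invoke stability of quasi-geodesics (the Morse lemma) in the $\delta$-hyperbolic Cayley graph $\Gamma$: since $u$ is a geodesic word, the sequence $1,g,g^{2},\dots$ is a uniform quasi-geodesic fellow-travelling the segment $[1,g]$, which lies on the geodesic ray to $e$; hence the geodesic ray to $e_N$ agrees with $\mathbf{w}$ on an initial segment of length $\ge N-C$ for a constant $C=C(\delta)$, so $e_N\to e$ as $N\to\infty$. The same geodesic-word estimate gives $\ell(g)\ge N-C'$, while the minimizing closed orbit can be taken to shadow the original trajectory over its first $N$ cells up to a bounded closing correction, so $T_0(N)=T_N+O(1)$ and therefore $s_N=N/T_N+o(1)$; since $s=\lim_n n/T_n$ along the subsequence realizing $(s,e)$, choosing $N$ along that subsequence gives $s_N\to s$, and in particular $s_N\ge s-\varepsilon$ for $N$ large. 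Combining this with the speed reduction of the first paragraph produces the required periodic $(s',e')$, and letting $\varepsilon\to 0$ establishes density. The delicate points to be verified carefully are that the variational closed-orbit minimizer is admissible (with contacts strictly interior to the edges), that $g$ is of infinite order so that $e_N$ is a genuine end, and that the closing correction needed to make the shadowing orbit periodic changes the period and the stable length only by bounded amounts, so that both coordinates converge as claimed.
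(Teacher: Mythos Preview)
Your approach differs substantially from the paper's, and it contains a genuine gap. The paper does not minimize in a free-homotopy class and then appeal to coarse geometry; instead it takes the \emph{given} long admissible segment $S^{[0,T]}x_0$ approximating $(s,e)$ and closes it up directly, using the anchoring mechanism of \S3.2: by appending bounded initial and terminal pieces one arranges that the initial and terminal anchor edges differ by a single translation vector $\vec v\in\mathbb{Z}^3$ and that the future force at the initial anchor is opposite to the past force at the terminal anchor. Releasing the anchor midpoints subject only to the constraint that they differ by $\vec v$ then yields a genuine periodic admissible orbit whose one-period data are those of the original segment up to a bounded change. The point is that no closing lemma or shadowing estimate is needed: the periodic orbit is obtained from the approximating segment by a uniformly bounded modification.

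The gap in your argument is the invocation of the Morse lemma for $\delta$-hyperbolic groups. From the presentation
\[
G=\langle a,b,c,d \mid ab=ba,\ ac=ca,\ dbd^{-1}=c\rangle
\]
the subgroup $\langle a,b\rangle$ is isomorphic to $\mathbb{Z}^2$ (its image in the abelianization $G^{\mathrm{ab}}\cong\mathbb{Z}^3$ is already $\mathbb{Z}^2$), so $G$ is \emph{not} Gromov hyperbolic and the stability of quasi-geodesics you use to control $e_N$ and $\ell(g)$ is unavailable. The paper's use of the word ``hyperbolic'' is informal and should not be read as $\delta$-hyperbolicity. A related difficulty is your claim $T_0(N)=T_N+O(1)$: minimizing length in the free-homotopy class of $g$ produces some closed geodesic, but you give no reason it should shadow the particular admissible segment realizing $(s,e)$; without that, neither the speed nor the end of the periodic orbit is under control. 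The paper sidesteps both issues by never leaving the original segment: the bounded anchoring correction automatically keeps the period within $O(1)$ of $T$ and the itinerary within bounded word distance of $g_T$, which is exactly what you were trying to extract from hyperbolicity.
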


\begin{proof}
The following statement immediately follows from the flexibility of our construction:
Given any finite, admissible trajectory segment $S^{[0,T]}x_0$, with the approximative
prescribed rotation vector $(s,e)\in AR$, one can always append a bounded initial
and terminal segment to $S^{[0,T]}x_0$, so that after this expansion the following properties hold:

\begin{enumerate}
\item[$(1)$] The initial and the terminal compartments of $S^{[0,T]}x_0$ differ by the
  same integer translation vector $\vec{v}\in\mathbb{Z}^3$ by which the initial and
  terminal anchor edges differ;
\item[$(2)$] The future force acting on the point of contact with the initial anchor
  is opposite to the past force acting on the point of contact with the terminal anchor edge.
\end{enumerate}

These two properties gurantee that, by releasing the midpoints as the points of contact and just
requiring that they differ by the integer vector $\vec{v}$, one constructs a periodic admissible orbit
with the approximative rotation vector $(s,e)$.
\end{proof}

\section{Topological entropy of the flow}

In this section we use the classic result in dynamics which relates
the growth rate of volume in the universal covering of our fundamental
domain to the topological entropy of the flow. In particular, we will
use the result as stated in \cite{M(1997)}:
\[ h_{\text{top}}(r_0) = \underset{T\to\infty} \lim \frac{1}{T}\log n_T(x,y),\]
where $h_{\text{top}(r_0)}$ is the topological entropy of the flow of
our billiard model and $n_T(x,y)$ is the number of homotopically
distinct trajectories joining $x$ and $y$ in our fundamental domain
having arc length no more than $T$. This calculation is independent of
the choice of $x$ and $y$. Note: The statement of the above fact in
\cite{M(1997)} is made in terms of geodesic flows on closed, connected
$C^\infty$ manifolds having non-positive sectional curvature. This
result applies to our billiard flow. For further reference,
\cite{M(1979)}. See \cite{MS(2016)} for an exposition on the
relationship between our billiard model and geodesic flows. \\

By using the result just stated, we may obtain upper and lower
estimates for the topological entropy our billiard flow by estimating
$n_T(x,y)$ from above and below, which we do in the proof of the
following theorem.

\begin{theorem}
The topological entropy of our billiard flow is bounded below by $\frac{1}{\sqrt{6}+2\sqrt{3}}\log 3$ and above by $\sqrt{3}\log 7$: 
\[
0.185777512\dots =\frac{1}{\sqrt{6}+2\sqrt{3}}\log 3 \leq h_{\text{top}} \leq \sqrt{3}\log 7=3.370415245\dots
\]
\end{theorem}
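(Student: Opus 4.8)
The plan is to estimate the counting function $n_T(x,y)$ from both sides and then apply the Manning-type formula $h_{\text{top}} = \lim_{T\to\infty} \frac{1}{T}\log n_T(x,y)$ quoted from \cite{M(1997)}. Since each homotopically distinct trajectory of length at most $T$ corresponds to a distinct element of the fundamental group $\pi_1(\mathcal{Q})=G$ realized by an admissible orbit within that length budget, the key is to relate the \emph{number of reachable group elements} (equivalently, the number of realizable symbolic itineraries) to the time budget $T$, using the per-cell time bounds already established in Subsection 3.1.

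For the \textbf{lower bound}, I would use the construction underlying Theorem 3.1. There I showed that any word $\mathbf{w}=w_1w_2\cdots w_n$ in the generators can be realized as an admissible trajectory whose first $n$ cells consume total time $T_n \le (\sqrt{6}+2\sqrt{3})\,n$. To get a clean exponential count, I would restrict to a sub-alphabet of at least three generators for which \emph{every} concatenation is admissible and irreducible (no cancellation against the relators $ab=ba,\ ac=ca,\ dbd^{-1}=c$), so that these $3^n$ words yield $3^n$ homotopically distinct trajectories. Each such trajectory has length at most $(\sqrt{6}+2\sqrt{3})\,n$, so with $T=(\sqrt{6}+2\sqrt{3})\,n$ we obtain $n_T \ge 3^n = 3^{T/(\sqrt{6}+2\sqrt{3})}$, giving
\[
h_{\text{top}} \ge \frac{1}{\sqrt{6}+2\sqrt{3}}\log 3.
\]

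For the \textbf{upper bound}, I would count all admissible symbolic itineraries compatible with a given length budget. From the table in Subsection 3.1, at each cell the trajectory exits through one of eight faces, corresponding to the generators $a^{\pm1},b^{\pm1},c^{\pm1},d^{\pm1}$; however, the admissibility constraints (principles in \S2.1, especially the no-immediate-backtracking condition forcing distinct consecutive edges) restrict the branching so that from each cell at most $7$ admissible continuations are available (one of the eight being excluded as the reversal of the entering face). The upper radial estimate of Theorem 3.2 shows that a trajectory of length $T$ passes through at most roughly $\sqrt{3}\,T$ face crossings, since $\limsup_{T\to\infty}(n_a+n_b+n_c+n_d)/T \le \sqrt{3}$. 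An itinerary of $m$ cells therefore corresponds to length at least $m/\sqrt{3}$, so with $m \approx \sqrt{3}\,T$ the number of itineraries is at most $7^m = 7^{\sqrt{3}\,T}$, whence $n_T \le 7^{\sqrt{3}\,T}$ (up to subexponential factors from the choice of entry face and the bounded anchoring modifications), yielding
\[
h_{\text{top}} \le \sqrt{3}\log 7.
\]

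The main obstacle is the upper bound bookkeeping: I must justify that the branching factor is genuinely bounded by $7$ and that distinct admissible itineraries really give homotopically distinct elements of $G$, which requires care because the relators $ab=ba$, $ac=ca$, $dbd^{-1}=c$ allow some words to coincide in the group. I would handle this by arguing that the variational construction produces, for each \emph{geodesic} word in $G$, a unique length-minimizing admissible representative, so the count of homotopy classes is dominated by the number of geodesic words, and the geodesic growth of the hyperbolic group $G$ is sub-exponentially comparable to the naive $7^m$ count of reduced itineraries. The lower-bound step, by contrast, is robust, since I need only exhibit \emph{one} exponentially large admissible family, which the three-generator sub-alphabet supplies directly.
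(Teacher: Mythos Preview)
Your overall strategy matches the paper's: combine the growth rate of $\pi_1(\mathcal{Q})$ in its Cayley graph with the radial speed estimates of \S3, via the Ma\~n\'e formula. The upper bound is essentially the paper's argument, and your concern there is misplaced: to bound $n_T$ from above you only need that every realized homotopy class is a group element of word length $\lesssim\sqrt{3}\,T$ (Theorem~3.3), and the number of such elements is at most $\mathrm{const}\cdot 7^{\sqrt{3}T}$ simply because $G$ has four generators. Whether distinct itineraries yield distinct group elements is irrelevant for an upper bound --- overcounting by words is harmless.

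The genuine gap is in your lower bound. You propose to ``restrict to a sub-alphabet of at least three generators for which every concatenation is admissible and irreducible,'' so that $3^n$ words yield $3^n$ distinct elements. No such triple exists among $\{a^{\pm1},b^{\pm1},c^{\pm1},d^{\pm1}\}$: any triple containing $a$ together with $b$ or $c$ is killed by the commutation relations $ab=ba$, $ac=ca$; any triple containing $d$ together with two of $b,c$ is killed by $db=cd$; and any triple containing a generator and its inverse has free cancellation. So the ``$3^n$ distinct words'' count fails as stated. The paper obtains $\log 3$ differently: it observes that $a$ and $d$ satisfy no relation, hence generate a free subgroup $F_2(a,d)\le G$. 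The Cayley graph of $F_2$ is the $4$-regular tree, in which the number of vertices at distance $n$ is $4\cdot 3^{n-1}$; the $3$ is the branching factor $2\cdot 2-1$, not the size of an alphabet. Since every reduced word in $a,d$ of length $\le n$ is realizable by an admissible trajectory of length $\le(\sqrt{6}+2\sqrt{3})\,n$ (Theorem~3.2), this gives $n_T\ge\mathrm{const}\cdot 3^{T/(\sqrt{6}+2\sqrt{3})}$ and hence the claimed lower bound. Replacing your three-letter alphabet by the free rank-two subgroup $\langle a,d\rangle$ repairs the argument completely.
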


\begin{proof}
According to the first theorem of the previous section the fundamental
group $\pi_1(\mathcal{Q})$ is the group finitely presented as follows:
\[
\pi_1(\mathcal{Q})\cong \langle a,b,c,d \hspace{0.1cm} |\hspace{0.1cm} ab=ba, \hspace{0.1cm} ac=ca, \hspace{0.1cm} dbd^{-1}=c \rangle.
\]

Since this group contains the subgroup $F_2(a,d)$ freely generated by $a$ and $d$ and the Cayley graph of
$F_2(a,d)$ is a $4$-regular tree branching into $3$ directions at each vertex, we get that the exponential
growth rate 
\[
\lambda=\underset{\rho\to\infty}\lim \frac{1}{\rho}\log\text{Vol}(B(\rho))
\]
of the number of vertices of the Cayley graph $\Gamma$ of the group $\pi_1(\mathcal{Q})$
is at least $\log 3$. Furthermore, since the group $\pi_1(\mathcal{Q})$ is generated by $4$ elements,
the growth rate of the number of words in $\Gamma$ of length $n$ is at most $\text{const}\cdot 7^n$,
the growth rate $\lambda$ cannot exceed $\log 7$.

According to Theorem 3.3, the linear growth rate $\rho(T)/T$ of the radius $\rho$ is at most $\sqrt{3}$.
On the other hand, by Theorem 3.2, in time $T$ all paths in $\Gamma$ with lengths not exceeding
$\dfrac{1}{\sqrt{6}+2\sqrt{3}}$ are realizable as billiard trajectories of length $\le T$. Combining
the above results we obtain the desired bounds
\[
\frac{1}{\sqrt{6}+2\sqrt{3}}\log 3 \leq h_{\text{top}} \leq \sqrt{3}\log 7.
\]
\end{proof}

\medskip

%    Text of article.

%    Bibliographies can be prepared with BibTeX using amsplain,
%    amsalpha, or (for "historical" overviews) natbib style.

\bibliographystyle{amsalpha}

\end{document}